\newcommand{\bB}{\mathbb{B}}
\newcommand{\bS}{\mathbb{S}}
\newcommand{\cF}{\mathcal{F}}
\newcommand{\E}{\mathbb E}
\newcommand{\R}{\mathbb{R}}
\renewcommand{\P}{\mathbb{P}}
\newcommand{\eps}{\varepsilon}
\newcommand{\ind}{\mathbbm{1}}
\newcommand{\dd}{{\rm d}}
\theoremstyle{plain}
\newtheorem{theorem}{Theorem}[section]
\newtheorem{lemma}[theorem]{Lemma}
\theoremstyle{definition}
\theoremstyle{remark}
\begin{document}

\author{Zakhar Kabluchko}
\address{Zakhar Kabluchko: Institut f\"ur Mathematische Stochastik,
Westf\"alische Wilhelms-Universit\"at M\"unster,
Orl\'eans-Ring 10,
48149 M\"unster, Germany}
\email{zakhar.kabluchko@uni-muenster.de}

\title[Angle sums of random simplices in dimensions $3$ and $4$]{Angle sums of random simplices in dimensions $3$ and $4$}

\keywords{Random polytopes, random simplices, solid angles, sum of angles, beta distributions}

\subjclass[2010]{Primary: 52A22, 60D05; Secondary: 52B11.}

\begin{abstract}
Consider a random $d$-dimensional simplex whose vertices are $d+1$ random points sampled independently and uniformly from the unit sphere in $\mathbb R^d$. We show that the expected sum of solid angles at the vertices of this random simplex equals $\frac 18$ if $d=3$ and $\frac{539}{288\pi^2}-\frac 16$ if $d=4$.
The angles are measured as proportions of the full solid angle which is normalized to be $1$.
Similar formulae are obtained if the vertices of the simplex are uniformly distributed in the unit ball.
These results are special cases of general formulae for the expected angle-sums of  random beta simplices in dimensions $3$ and $4$.
\end{abstract}

\maketitle

\section{Main results}

\subsection{Introduction and notation}
The sum of the measures of angles in any plane triangle is constant and equals $1/2$ of the full plane angle. For a tetrahedron in a three-dimensional space, neither the sum of the solid three-dimensional angles at its vertices, nor the sum of the solid dihedral angles at its edges is constant. In fact, the former can take any value between $0$ and $1/2$ of the full angle, whereas the latter can take any value  between $1$ and $3/2$. The range of all possible values of angle sums, for simplices of arbitrary dimension and for angles taken at faces of arbitrary dimension, was completely identified by~\citet[(24) on pp.~208--209]{perles_shephard}.

The aim of the present paper is to prove an explicit formula for expected angle-sums of \textit{random} simplices whose vertices are independent and identically distributed random points sampled according to the uniform distribution on the unit sphere or the unit ball in dimensions $3$ and $4$.  These two distributions are special cases of a general family of beta distributions for which we shall also provide an explicit formula.

Let us first introduce the necessary notation, referring to the book by Schneider and Weil~\cite{SW08} for an extensive account of stochastic geometry.  Let  $\|x\| = (x_1^2+\ldots+x_d^2)^{1/2}$ be the Euclidean norm of the vector $x= (x_1,\ldots,x_d)\in\R^d$ and denote the unit ball and the unit sphere in $\R^d$ by
$$
\bB^{d}:=\{x\in\R^d\colon \|x\| \leq 1\}
\quad
\text{and}
\quad
\bS^{d-1} := \{x\in \R^d\colon \|x\| = 1\}.
$$
Let $P\subset \R^d$ be a $d$-dimensional convex polytope. Denote by $\cF_0(P)$ the set of vertices of $P$.
The \textit{internal angle} of $P$ at its vertex $x_0$ is defined as
$$
\beta(x_0,P) := \P [\exists \eps>0 \text{ such that } x_0 + \eps U \in P],
$$
where $U$ is a random vector having the uniform distribution on the unit sphere $\bS^{d-1}$.
Finally, denote the sum of angles of $P$ at its vertices by
$$
s_0(P) = \sum_{x_0 \in \cF_0(P)} \beta(x_0,P).
$$
Note that the units of measurement for solid angles were chosen such that the full solid angle has measure $1$.


Consider $d+1$ random points $X_0,\ldots,X_d$ in $\R^d$ drawn independently according to some probability distribution $\mu$. Define the random simplex $T$ as their convex hull:
$$
T := [X_0,\ldots,X_d]
:= \{\lambda_0X_0+\ldots+\lambda_d X_d\colon \lambda_0+\ldots+\lambda_d=1, \lambda_0\geq 0,\ldots, \lambda_{d}\geq 0\}.
$$
We are interested in determining the expected value of the angle sum $s_0(T)$.
One special case is already known: If $\mu$ is the standard Gaussian distribution on $\R^d$, then $\E s_0(T)$ coincides with the sum of angles of the regular $d$-dimensional simplex at its vertices; see~\cite{kabluchko_zaporozhets_gauss_simplex}. In fact, the same conclusion applies if $\mu$ is any Gaussian distribution with non-singular covariance matrix~\cite{kabluchko_zaporozhets_ongoing}.

\subsection{Main results}
The aim of the present paper is to prove the following theorems.
\begin{theorem} [Points sampled on the sphere]\label{theo:main_sphere}
Let $X_0,\ldots,X_d$ be independent random points sampled uniformly on the unit sphere $\bS^{d-1}= \{x\in\R^d\colon \|x\| = 1\}$. Then, for $d=3$ and $d=4$,  the expected angle-sum of the simplex $T = [X_0,\ldots,X_d]$ is given by
$$
\E  s_0(T) =
\begin{cases}
\frac 18, &\text{ if } d=3,\\
\frac{539}{288\pi^2}-\frac 16, &\text{ if } d=4.
\end{cases}
$$
\end{theorem}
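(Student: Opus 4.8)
The plan is to reduce $\E s_0(T)$ to a Sylvester-type absorption probability for a family of beta-distributed points living one dimension lower, and then to evaluate that probability explicitly in the two cases $d=3,4$.

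\textbf{Step 1: a projection identity.} By exchangeability of $X_0,\ldots,X_d$ we have $\E s_0(T)=(d+1)\,\E\beta(X_0,T)$. By definition, $\beta(X_0,T)$ is the probability, over $U$ uniform on $\bS^{d-1}$ and independent of the points, that $X_0+\eps U\in T$ for some $\eps>0$; since the only facet of $T$ not containing $X_0$ is $[X_1,\ldots,X_d]$, this happens precisely when the ray $\{X_0+tU:t>0\}$ meets the relative interior of $[X_1,\ldots,X_d]$. Pairing each direction $U$ with $-U$: for a generic configuration, the line $X_0+\R U$ meets $\aff[X_1,\ldots,X_d]$ in a single point $q\neq X_0$, and $q$ lies on exactly one of the two rays; moreover $q$ belongs to $[X_1,\ldots,X_d]$ iff $\pi_U(X_0)\in\conv(\pi_U(X_1),\ldots,\pi_U(X_d))$, where $\pi_U$ is orthogonal projection onto $U^{\perp}$. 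Hence, after averaging over $U$ and using rotational invariance of $\mu$ to fix $U=e_d$ (so that $\pi=\pi_U\colon\R^d\to\R^{d-1}$ is the coordinate projection),
$$
\E s_0(T)=\frac{d+1}{2}\,\P\!\left[\pi(X_0)\in\conv\!\big(\pi(X_1),\ldots,\pi(X_d)\big)\right].
$$
As a check, for $d=2$ the right-hand side is $\tfrac32\cdot\tfrac13=\tfrac12$.

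\textbf{Step 2: the projected distribution is a beta distribution.} For $X_i$ uniform on $\bS^{d-1}$, computing the surface-area element over the graph $x_d=\pm\sqrt{1-\|y\|^2}$ shows that $\pi(X_i)$ has density proportional to $(1-\|y\|^2)^{-1/2}$ on $\bB^{d-1}$, i.e.\ a beta distribution with parameter $-\tfrac12$, for every $d$. (More generally, the coordinate projection of a beta distribution with parameter $\beta$ in $\R^d$ is the beta distribution with parameter $\beta+\tfrac12$ in $\R^{d-1}$, which is why the statement sits inside the beta-simplex framework.) The problem is thus reduced to computing, for $d+1$ i.i.d.\ points $Y_0,\ldots,Y_d$ in $\R^{d-1}$ with this density, the absorption probability $p_{d-1}:=\P[Y_0\in\conv(Y_1,\ldots,Y_d)]$. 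Since $d+1$ generic points in $\R^{d-1}$ admit a unique Radon partition, at most one of them lies in the convex hull of the others, so $(d+1)p_{d-1}=\P[\text{Radon type }(1,d)]$ and
$$
\E s_0(T)=\tfrac12\,\P\!\left[\text{the }d+1\text{ projected points have a }(1,d)\text{ Radon partition}\right].
$$

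\textbf{Step 3: the two explicit evaluations.} For $d=3$ this is the Sylvester four-point problem in the plane for the rotationally symmetric density $\propto(1-\|y\|^2)^{-1/2}$: here $p_2=\tfrac14\big(1-q\big)$ with $q$ the probability that $Y_0,\ldots,Y_3$ are in convex position, and $q$ is obtained by the classical route of integrating the beta-content of a random triangle; this yields $p_2=\tfrac1{16}$ and hence $\E s_0(T)=2p_2=\tfrac18$. For $d=4$ one needs the analogue for $5$ i.i.d.\ beta-$(-\tfrac12)$ points in $\R^3$, namely the probability that the fifth point lies in the tetrahedron spanned by the other four; this is the genuinely hard step. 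I would carry it out either by a direct (but lengthy) evaluation of the resulting multiple integral over $\bB^3$, or — more efficiently — by invoking the known closed-form expressions for the expected probability content of a random beta-simplex and specializing them to dimension $3$ and parameter $-\tfrac12$; the $\pi^2$ in the final answer is exactly what such spherical integrals produce. This should give $p_3=\tfrac{539}{720\pi^2}-\tfrac1{15}$, whence $\E s_0(T)=\tfrac52 p_3=\tfrac{539}{288\pi^2}-\tfrac16$. The reduction in Steps 1–2 is soft; the only real obstacle is this last explicit integration in $\R^3$.
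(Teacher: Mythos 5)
Your Steps 1 and 2 are correct and in substance reproduce the paper's own reduction: the per-vertex ray argument you give is exactly the Feldman--Klain identity $2 s_0(S)=\P[\,\Pi S \text{ is a } (d-1)\text{-simplex}\,]$ (summed over the $d+1$ vertices, since generically at most one projected vertex is absorbed by the others), and the projection of the uniform distribution on $\bS^{d-1}$ onto a hyperplane is indeed the beta density with parameter $-\tfrac12$. The genuine gap is Step 3, which is where all the actual work lives: you do not compute the absorption probabilities $p_2$ and $p_3$, you only assert the values they would have to take for the theorem to hold. For $d=3$ the planar Sylvester computation for the density $\propto(1-\|y\|^2)^{-1/2}$ is classical and could be completed, but for $d=4$ you explicitly defer to either ``a direct (but lengthy) evaluation'' or ``known closed-form expressions for the expected probability content of a random beta-simplex,'' and neither is carried out. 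The second option is also on shaky ground: what is actually available in closed form is not the expected $\mu$-content of a beta simplex but the expected \emph{face numbers} of beta polytopes (Kabluchko--Temesv\'ari--Th\"ale), and passing from one to the other requires an Efron-type identity together with the combinatorial classification of simplicial polytopes with few vertices.

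That classification is precisely the paper's key device, and it is absent from your proposal. The paper observes that five points in general position in $\R^3$ have as convex hull either a tetrahedron ($4$ facets) or the unique simplicial $3$-polytope with $5$ vertices, which has $6$ facets; hence $\E f_2 = 6-2p$ determines the simplex probability $p$, and $\E f_2$ is then read off from the explicit integral formula for $\E f_{d-1}(P^{\beta}_{n,d})$. (The $d=3$ case works the same way with triangles versus quadrilaterals, $\E f_1 = 4-p$.) Without this step, or an equivalent explicit evaluation of $p_3$, the $d=4$ case is not proved; the $\tfrac{539}{720\pi^2}-\tfrac1{15}$ you quote is reverse-engineered from the answer rather than derived. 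The paper's closing remark that the method fails for $d\ge 5$ --- because there is then more than one non-simplex combinatorial type, with differing facet counts --- is a further indication that this evaluation is the real obstacle and not a routine integration one can defer.
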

\begin{theorem} [Points sampled in the ball]\label{theo:main_ball}
Let $X_0,\ldots,X_d$ be independent random points sampled uniformly in the unit ball  $\bB^{d}=\{x\in\R^d\colon \|x\| \leq 1\}$. Then, for $d=3$ and $d=4$, the expected angle-sums of the simplex $T = [X_0,\ldots,X_d]$ are given by
$$
\E  s_0(T) =
\begin{cases}
\frac {401}{2560}, &\text{ if } d=3,\\
\frac{1692197}{846720 \pi^2}-\frac 16, &\text{ if } d=4.
\end{cases}
$$
\end{theorem}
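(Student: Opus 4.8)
The plan is to reduce the angle-sum computation to an integral-geometric identity and then evaluate the resulting beta-type integrals explicitly. By symmetry among the $d+1$ i.i.d. vertices, $\E s_0(T) = (d+1)\,\E\beta(X_0,T)$, so it suffices to compute the expected internal solid angle at a single distinguished vertex. The internal angle $\beta(X_0,T)$ is the probability that a uniform random direction $U$ at $X_0$ points into $T$; equivalently it is the normalized solid angle of the cone spanned by $X_1-X_0,\ldots,X_d-X_0$. The key classical tool is the \emph{conical (spherical) kinematic / Crofton-type} representation together with the well-known fact that in dimensions where $d$ is small one can write the solid angle of a simplicial cone via the Gram determinant or, more efficiently for expectations, via the probability that the dual cone contains a random point. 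Concretely I would use the identity that the angle at $X_0$ equals the probability that $X_0$ is the "bottom" vertex in a suitable random half-space experiment, and integrate over the positions of $X_0,\ldots,X_d$.

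First I would pass from the uniform-on-sphere and uniform-in-ball cases to the common framework of \emph{beta distributions} $f_{d,\beta}(x)\propto(1-\|x\|^2)^\beta$ on $\bB^d$ (with $\beta=-1$ recovering the sphere as a weak limit and $\beta=0$ the ball), and prove the general-$d$, general-$\beta$ formula for $d=3$ and $d=4$; the two theorems then follow by substituting $\beta=-1$ and $\beta=0$. The engine here is the \emph{beta-Cauchy} type formulas and the angle decomposition: for a beta simplex, affine subspaces spanned by subsets of the vertices again carry (lower-dimensional) beta distributions with shifted parameters, and orthogonal projections/residuals are independent beta-distributed. This lets one condition on the facet $[X_1,\ldots,X_d]$ opposite to $X_0$, write $\beta(X_0,T)$ as a function of the position of $X_0$ relative to the hyperplane through that facet, and take expectations. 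In dimension $3$ the solid angle of a trihedral cone is given by a spherical-excess formula; in dimension $4$ one must handle the solid angle of a simplicial cone in $\mathbb R^4$, which is where genuine special-function integrals (dilogarithms, values like $539/288$ times $\pi^{-2}$) appear.

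The main obstacle I anticipate is the explicit evaluation of the four-dimensional solid-angle integral. Unlike $d=3$, where Girard's theorem reduces everything to dihedral angles and one obtains rational multiples of powers of $\pi$ that integrate cleanly, the $d=4$ solid angle of a random simplicial cone has no elementary closed form pointwise, so the strategy must be to integrate \emph{first} over all but one or two parameters using the beta integral recursions, exploiting symmetry to collapse the remaining integral to a one- or two-dimensional integral of an arctangent-type function against a polynomial weight. Evaluating that integral — likely via integration by parts, partial fractions, and known integrals of $\arctan$ and $\arccos$ against powers — is the computational heart of the proof and the source of the denominator $288\pi^2$ (resp.\ $846720\pi^2$ in the ball case). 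I would organize this as: (i) set up $\E s_0(T)=(d+1)\E\beta(X_0,T)$ and the beta-simplex conditioning; (ii) reduce to an integral over the "height" of $X_0$ above the opposite facet and the shape of the facet; (iii) carry out the elementary $d=3$ evaluation as a warm-up; (iv) perform the $d=4$ reduction to a low-dimensional special-function integral and evaluate it; (v) substitute $\beta=-1,0$ to recover the four stated numbers, checking the $d=3$ ball value $401/2560$ and sphere value $1/8$ as consistency checks.

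A useful auxiliary input I would invoke is the existing literature on expected internal and external angles of beta simplices (the companion works cited in the excerpt on Gaussian simplices and on beta polytopes), which already provide the recursive beta-integration machinery and, in particular, the Sommerville-type formulas relating angle-sums across dimensions; combined with the fact that $\E s_0(T)$ for the Gaussian case equals the regular-simplex angle sum, these give both a template for the argument and a numerical sanity check in the $\beta\to\infty$ limit. If a fully self-contained derivation of the $d=4$ integral proves too long, the fallback is to express $\E\beta(X_0,T)$ through the expected \emph{external} angles via the Gram–Euler and Brianchon–Gram relations, for which more tools are available, and then transfer back.
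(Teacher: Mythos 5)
Your plan correctly identifies the beta-distribution framework, the reduction $\E s_0(T)=(d+1)\,\E\beta(X_0,T)$ by exchangeability, and the projection property of beta densities, but it leaves the decisive step unresolved: you never actually produce a method for evaluating the expected solid angle at a vertex. Conditioning on the facet $[X_1,\ldots,X_d]$ and integrating the solid angle of the cone at $X_0$ requires a pointwise formula for that solid angle, and you yourself note that in $\R^4$ no elementary closed form exists; the proposed remedy (``integrate first \ldots collapse to an arctangent-type integral'') is a hope, not an argument. Even in $d=3$, Girard's spherical-excess route converts the problem into expected dihedral angles, i.e.\ angles of projections along \emph{edge directions}, which are data-dependent and not independent of the vertex positions, so the beta projection lemma does not apply to them directly. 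The fallback via external angles and Brianchon--Gram runs into the same wall: the paper explicitly notes that no formulas for individual expected internal angles of beta simplices were available, and indeed the author defers that machinery to a separate paper. So as written the proposal is a research programme whose computational heart is missing.

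The paper's actual route avoids computing any individual angle. It uses the Feldman--Klain identity $2 s_0(S)=\P[\,\Pi S \text{ is a $(d-1)$-simplex}\,]$ for a projection $\Pi$ onto a \emph{uniformly random} hyperplane $U^\perp$ (so the projection lemma does apply, sending $f_{d,\beta}$ to $f_{d-1,\beta+1/2}$), and then exploits the fact that for $d=3,4$ the projected polytope has only two possible combinatorial types, distinguishable by their facet counts. Hence the probability $p$ that the projection is a simplex is a linear function of $\E f_{d-2}(\Pi T)$, which is given by the known closed-form expression for the expected number of facets of a beta polytope. This converts the whole problem into a one-dimensional beta integral with no solid-angle evaluation anywhere. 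If you want to salvage your approach, you would need to import this identity (or an equivalent device such as the Affentranger--Schneider projection formula); without it, the gap at step (iv) is fatal.
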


These theorems are particular cases of Theorems~\ref{theo:general_beta_d=3} and~\ref{theo:general_beta_d=4} which we shall state below. The proofs of the latter theorems will be given in Section~\ref{sec:proofs}.
As we shall explain at the end of Section~\ref{sec:proofs}, the proofs do not carry over to dimensions $d\geq 5$. Using different methods, it is possible to build an algorithm computing the expected angle-sums in higher dimensions. This problem will be studied in a separate paper. The advantage of the method used in the present paper is its simplicity.


For the sake of brevity, we considered only angle sums at vertices of the simplex. More generally, we can denote by $s_k(T)$ the sum of the internal angles of $T$ at all faces of dimension $k\in \{0,\ldots,d-1\}$. Luckily, in dimensions $3$ and $4$, all quantities $s_k(T)$ can be expressed through $s_0(T)$ and the trivial value $s_{d-1}(T) = (d+1)/2$. In dimension $3$, the Gram--Euler relation states that $s_0(T) - s_1(T) + s_{2}(T) = 1$; see~\cite[Section~14.1]{GruenbaumBook}. In dimension $4$, we have the Gram--Euler relation  $s_0(T)-s_1(T)+s_2(T)-s_3(T) = -1$ and a Dehn--Sommerville relation
$-2 s_1(T) + 3 s_2(T) - 6 s_3(T) = -10$; see~\cite[Section 14.2 and p.~307]{GruenbaumBook}.
 If the  vertices are sampled uniformly on the sphere, we obtain the values
$$
\E s_1(T) =
\begin{cases}
\frac 98, &\text{ if } d=3,\\
\frac{539}{96\pi^2}, &\text{ if } d=4,
\end{cases}
\qquad
\E s_2(T) =
\begin{cases}
2, &\text{ if } d=3,\\
\frac{5}{3}+\frac{539}{144 \pi ^2}, &\text{ if } d=4.
\end{cases}
$$
For vertices sampled uniformly in the ball, we have
$$
\E s_1(T) =
\begin{cases}
\frac{2961}{2560}, &\text{ if } d=3,\\
\frac{1692197}{282240 \pi ^2}, &\text{ if } d=4,
\end{cases}
\qquad
\E s_2(T) =
\begin{cases}
2, &\text{ if } d=3,\\
\frac{5}{3}+\frac{1692197}{423360 \pi ^2}, &\text{ if } d=4.
\end{cases}
$$
The formulae remain valid if the uniform distribution on the ball is replaced by the uniform distribution on the interior of any $d$-dimensional ellipsoid~\cite{kabluchko_zaporozhets_ongoing}.

\subsection{Angle-sums of beta simplices}
In order to prove Theorems~\ref{theo:general_beta_d=3} and~\ref{theo:general_beta_d=4} it is  necessary to pass to a more general family of distributions including the uniform distributions on the ball and on the sphere as special cases. We say that a random vector in $\R^d$ has a $d$-dimensional \textit{beta distribution} with parameter $\beta>-1$ if its Lebesgue density is
\begin{equation}\label{eq:def_f_beta}
f_{d,\beta}(x)=c_{d,\beta} \left( 1-\left\| x \right\|^2 \right)^\beta\ind_{\{\|x\| <  1\}},\qquad x\in\R^d,\qquad
c_{d,\beta}= \frac{ \Gamma\left( \frac{d}{2} + \beta + 1 \right) }{ \pi^{ \frac{d}{2} } \Gamma\left( \beta+1 \right) }.
\end{equation}
Taking $\beta=0$, we recover the uniform distribution on the unit ball $\bB^d$.  The uniform distribution on the unit sphere $\bS^{d-1}$ appears as the weak limit of the beta distribution as $\beta\downarrow -1$; see~\cite[Proof of Corollary~3.9]{beta_polytopes_temesvari}.
These distributions were introduced by Ruben and Miles~\cite{ruben_miles} and Miles~\cite{miles}.  

Let $X_0,\ldots,X_d$ be independent random points in $\R^d$ distributed according to the beta distribution $f_{d,\beta}$, where $\beta\geq -1$. Their convex hull $[X_0,\ldots,X_d]$ is called the $d$-dimensional \textit{beta simplex}. We allow the value $\beta=-1$, in which case $X_0,\ldots,X_d$ are uniformly distributed on the unit sphere $\bS^{d-1}$. Beta simplices and, more generally, beta polytopes were studied in~\cite{ruben_miles,miles,beta_polytopes_temesvari,bonnet_etal,BonnetEtAlThresholds,beta_simplices,beta_polytopes}. In particular, it was demonstrated in~\cite{beta_polytopes} that many quantities appearing in stochastic geometry can be expressed through the expected internal angles of beta simplices, but no formula for the latter was obtained in~\cite{beta_polytopes} except for some trivial cases.

Now we are ready to state the results on the expected angle-sums of beta simplices in dimensions $3$ and $4$.



\begin{theorem}[$d=3$]\label{theo:general_beta_d=3}
Let $X_0,\ldots,X_3$ be i.i.d.\ points in the three-dimensional unit ball $\bB^3$ sampled from the distribution $f_{3,\beta}$, $\beta\geq -1$. Then, the expected sum of solid angles of the tetrahedron $T_{\beta}:=[X_0,\ldots,X_3]$ at its vertices is given by
\begin{equation*}
\E s_0(T_\beta)
=
2 - \frac{6\, \Gamma^2(\beta+\frac 52)\Gamma(2\beta+4)}{\pi^{3/2}\Gamma^2(\beta +2)\Gamma(2\beta + \frac 72)} \int_{-\pi/2}^{+\pi/2} (\cos\varphi)^{4\beta + 6} \left(\int_{-\pi/2}^\varphi (\cos \theta)^{2\beta+3}\dd \theta\right)^2 \dd \varphi.
\end{equation*}
\end{theorem}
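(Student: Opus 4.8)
My plan is to reduce the angle-sum to a single dihedral angle, re-express that angle probabilistically after projecting along the edge, and then recognise the resulting integrand by combining the beta algebra with an affine Blaschke--Petkantschin formula. Since $s_2(T_\beta) = 2$ identically, the Gram--Euler relation $s_0 - s_1 + s_2 = 1$ gives $s_0(T_\beta) = s_1(T_\beta) - 1$, and by exchangeability of $X_0,\dots,X_3$ we have $\E s_1(T_\beta) = \tfrac{3}{\pi}\,\E\delta$, where $\delta\in(0,\pi)$ denotes the dihedral angle of $T_\beta$ along the edge $[X_0,X_1]$. Writing $\tfrac{3}{\pi}\E\delta - 1 = 2 - \tfrac{3}{\pi}\E(\pi-\delta)$, it remains to evaluate $\E(\pi-\delta)$.

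Let $E := (X_1-X_0)^\perp$ be the $2$-plane through the origin orthogonal to the edge, with orthogonal projection $\Pi_E$. Then $\delta = \angle\big(\Pi_E X_2 - p,\ \Pi_E X_3 - p\big)$, where $p := \Pi_E X_0 = \Pi_E X_1$ is the foot of the perpendicular from the origin to $\aff(X_0,X_1)$, so that $\|p\| = h := \dist\!\big(0,\aff(X_0,X_1)\big)$. Conditionally on $X_0,X_1$ the points $X_2,X_3$ are independent $f_{3,\beta}$-points, hence $\Pi_E X_2,\Pi_E X_3$ are independent $f_{2,\beta+1/2}$-points on $E$ by the standard projection rule for beta distributions (the projection of $f_{3,\beta}$ onto a $2$-plane is $f_{2,\beta+1/2}$). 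For fixed planar points $p,q_2,q_3$ one has $\big(\pi-\angle(q_2-p,q_3-p)\big)/\pi = \P\big[q_2,q_3\ \text{lie on the same side of a uniformly random line through}\ p\big]$; conditioning such a line on its direction and rotating the plane so that this direction is horizontal sends $p$ to $h(\cos\Theta,\sin\Theta)$ with $\Theta$ uniform on $[0,2\pi)$, independent of the (still i.i.d.\ $f_{2,\beta+1/2}$) images of $q_2,q_3$, and turns the line into $\{y=S\}$ with $S:=h\sin\Theta$. With $G$ the distribution function of one coordinate of an $f_{3,\beta}$-point and $\E[G(S)]=\tfrac12$ (symmetry of $S$ together with $G(-s)=1-G(s)$), we obtain $\P[\text{same side}\mid S]=G(S)^2+(1-G(S))^2$, whence
\[
\E(\pi-\delta)=2\pi\,\E[G(S)^2],\qquad\text{so}\qquad \E s_0(T_\beta) = 2 - 6\,\E[G(S)^2].
\]

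It remains to identify the laws of $S$ and $G$. By the affine Blaschke--Petkantschin formula for lines in $\R^3$, $\dd X_0\,\dd X_1 = c\,|X_0-X_1|^{2}\,\dd\ell\,\dd t_0\,\dd t_1$, where $t_0,t_1$ are the coordinates of $X_0,X_1$ along the line $\ell$; since $\|x\|^2=h^2+(t-t^\ast)^2$ on a line at distance $h$ from the origin, the substitution $t_i\mapsto\sqrt{1-h^2}\,t_i$ turns the inner integral over $t_0,t_1$ into a constant times $(1-h^2)^{2\beta+2}$, and together with the radial factor coming from $\dd\ell$ this shows that $h$ has density $\propto h(1-h^2)^{2\beta+2}$ on $(0,1)$, i.e.\ $h^2\eqdistr\Beta(1,2\beta+3)$. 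As $\sin^2\Theta\eqdistr\Beta(\tfrac12,\tfrac12)$ is independent of $h^2$, the beta algebra gives $S^2=h^2\sin^2\Theta\eqdistr\Beta(\tfrac12,2\beta+\tfrac72)$, so by symmetry $S$ has density $\propto(1-s^2)^{2\beta+5/2}$ on $(-1,1)$ and the substitution $S=\sin\Sigma$ makes $\Sigma$ have density $\propto(\cos\varphi)^{4\beta+6}$ on $(-\tfrac{\pi}{2},\tfrac{\pi}{2})$. Likewise a coordinate of an $f_{3,\beta}$-point equals $\sin\Theta_q$ with $\Theta_q$ of density $\propto(\cos\theta)^{2\beta+3}$, so $G(\sin\sigma)=I^{-1}\int_{-\pi/2}^{\sigma}(\cos\theta)^{2\beta+3}\dd\theta$ with $I:=\int_{-\pi/2}^{\pi/2}(\cos\theta)^{2\beta+3}\dd\theta$. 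Substituting these into $\E s_0(T_\beta)=2-6\,\E[G(S)^2]$ produces the stated double integral with prefactor $6/(I^2J)$, where $J:=\int_{-\pi/2}^{\pi/2}(\cos\varphi)^{4\beta+6}\dd\varphi$; the elementary value $\int_{-\pi/2}^{\pi/2}(\cos\theta)^p\,\dd\theta=\sqrt\pi\,\Gamma(\tfrac{p+1}{2})/\Gamma(\tfrac p2+1)$ turns $6/(I^2J)$ into precisely $6\,\Gamma^2(\beta+\tfrac52)\Gamma(2\beta+4)/\big(\pi^{3/2}\Gamma^2(\beta+2)\Gamma(2\beta+\tfrac72)\big)$.

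I expect the work to sit in the two middle steps. Getting the projected picture right is delicate: one must see that \emph{both} endpoints of the edge project onto the foot of the perpendicular from the origin, so that the relevant offset is the single scalar $h$, and then carry out the bookkeeping that collapses the complementary dihedral angle to $\E[G(S)^2]$ after integrating out the auxiliary uniform direction. Invoking the affine Blaschke--Petkantschin formula requires the correct power $|X_0-X_1|^{d-1}=|X_0-X_1|^{2}$ and a clean rescaling; the normalising constants are then pinned down automatically by the requirement that a probability density integrate to $1$. The first reduction and the closing Beta-function computation are routine, and the endpoint $\beta=-1$ (where $h$ has density $\propto h$ on $(0,1)$) follows by continuity or by a direct check on the uniform distribution on $\bS^2$.
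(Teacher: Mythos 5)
Your proof is correct, but it follows a genuinely different route from the paper's. The paper uses the Feldman--Klain identity $2s_0(S)=\P[\Pi S \text{ is a simplex}]$ for a projection onto a uniformly random hyperplane, observes that in the plane the only two combinatorial types are the triangle and the quadrilateral, and then outsources the resulting integral to the known formula for the expected number of edges of the planar beta polytope $P_{4,2}^{\beta+1/2}$ from the Kabluchko--Temesv\'ari--Th\"ale paper. You instead use the Gram--Euler relation together with $s_2\equiv 2$ to reduce $\E s_0$ to the expected dihedral angle $\E\delta$, compute $\delta$ by projecting along the edge onto $(X_1-X_0)^\perp$, convert the complementary planar angle into a separation probability for a random line through the projected edge-point, and then determine the law of $h=\dist(0,\aff(X_0,X_1))$ via the affine Blaschke--Petkantschin formula (with the correct power $|X_0-X_1|^{d-1}$) and the beta product algebra. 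I checked the key computations: the density $\propto h(1-h^2)^{2\beta+2}$ of $h$, the law $\Beta(\tfrac12,2\beta+\tfrac72)$ of $S^2$, the identity $\E(\pi-\delta)=2\pi\E[G(S)^2]$ (using $\E G(S)=\tfrac12$), and the normalisation $6/(I^2J)$ all come out right and reproduce the stated prefactor exactly; indeed your integral $\E[G(S)^2]$ is literally the same integral that the paper obtains as $\E f_1(P_{4,2}^{\beta+1/2})$ up to affine rescaling. What each approach buys: the paper's argument is shorter because the hard integral is a quotable theorem, and it generalises verbatim to $d=4$ (and explains why it stops there); your argument is more self-contained and makes the probabilistic meaning of the integrand explicit, and it yields $\E s_1$ directly, but it is tied to $d=3$, since in $d=4$ the angles at edges are solid angles in $\R^3$ rather than dihedral angles and the reduction via Gram--Euler alone would no longer suffice. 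The only point needing a word of care, which you flag appropriately, is the endpoint $\beta=-1$, where the Blaschke--Petkantschin computation must be justified by weak convergence as $\beta\downarrow -1$ or by a separate direct calculation on the sphere.
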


\begin{theorem}[$d=4$]\label{theo:general_beta_d=4}
Let $X_0,\ldots,X_4$ be i.i.d.\ points in the four-dimensional unit ball $\bB^4$ with density $f_{4,\beta}$, $\beta\geq -1$. Then, the expected sum of solid angles of the $4$-dimensional simplex $T_\beta:=[X_0,\ldots,X_4]$ at its vertices is given by
\begin{equation*}
\E s_0(T_\beta)
=
\frac 32 -  \frac{5\, \Gamma^2(\beta+3)\Gamma(3\beta+7)}{\pi^{3/2} \Gamma^2(\beta + \frac 5 2)\Gamma(3\beta + \frac {13}2)}
\int_{-\pi/2}^{+\pi/2} (\cos\varphi)^{6\beta+12} \left(\int_{-\pi/2}^\varphi (\cos \theta)^{2\beta+4}\dd\theta\right)^2 \dd \varphi.
\end{equation*}
\end{theorem}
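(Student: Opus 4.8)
The plan is to evaluate $\E s_0(T_\beta)$ in three steps, reducing it to the one-dimensional integral in the statement (the argument works uniformly for $d\in\{3,4\}$).

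\emph{Step 1: from the solid angle to a projection probability.} By exchangeability of $X_0,\dots,X_d$ we have $\E s_0(T_\beta)=(d+1)\,\E\,\beta(X_0,T_\beta)$, and $\beta(X_0,T_\beta)=\P\bigl[U\in\pos(X_1-X_0,\dots,X_d-X_0)\bigr]$ for an independent $U$ uniform on $\bS^{d-1}$. One checks that $U$ lies in this cone if and only if the ray $\{X_0+tU\colon t>0\}$ meets the opposite facet $\conv(X_1,\dots,X_d)$. Replacing $U$ by $-U$ leaves the line $X_0+\R U$ unchanged, and exactly one of the two opposite rays meets the facet whenever the line does, so $\E\,\beta(X_0,T_\beta)=\tfrac12\,\P[\text{the line }X_0+\R U\text{ meets }\conv(X_1,\dots,X_d)]$. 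Projecting orthogonally along $U$ onto $U^\perp\cong\R^{d-1}$, this last event is the event that the projection of $X_0$ falls into the convex hull of the projections of $X_1,\dots,X_d$. Conditioning on $U$, using rotation invariance, and the classical Ruben--Miles fact that an orthogonal projection of $f_{d,\beta}$ onto a hyperplane is $f_{d-1,\beta+1/2}$, we obtain
$$
\E s_0(T_\beta)=\frac{d+1}{2}\,\P\bigl[Y_0\in\conv(Y_1,\dots,Y_d)\bigr],
$$
where $Y_0,\dots,Y_d$ are i.i.d.\ with density $f_{d-1,\beta+1/2}$ in $\R^{d-1}$.

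\emph{Step 2: a combinatorial identity.} Almost surely $Y_0,\dots,Y_d$ are in general position, so their Radon partition is unique; at most one $Y_i$ lies in $\conv(\{Y_j\colon j\ne i\})$, and this occurs precisely when the Radon partition has type $(1,d)$. Hence $(d+1)\,\P[Y_0\in\conv(Y_1,\dots,Y_d)]=\P[\text{type }(1,d)]$. For $i\ne j$ let $A_{ij}$ be the indicator that $Y_i,Y_j$ lie strictly on the same side of $\aff(\{Y_k\colon k\ne i,j\})$. A direct case analysis shows that $\sum_{\{i,j\}}A_{ij}$ equals $d$ on the event of type $(1,d)$ and equals $2(d-1)$ on the event of type $(2,d-1)$; since for $d\in\{3,4\}$ these are the only possible types, $\sum_{\{i,j\}}A_{ij}=2(d-1)-(d-2)\,\ind[\text{type }(1,d)]$ a.s. Taking expectations and using exchangeability,
$$
\E s_0(T_\beta)=\tfrac12\,\P[\text{type }(1,d)]=\frac{2(d-1)-\binom{d+1}{2}\,\E A_{01}}{2(d-2)},
$$
which equals $2-3\,\E A_{01}$ for $d=3$ and $\tfrac32-\tfrac52\,\E A_{01}$ for $d=4$.

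\emph{Step 3: evaluating $\E A_{01}$ by Blaschke--Petkantschin.} Condition on $Y_2,\dots,Y_d$, which a.s.\ span a hyperplane $H=\{x\colon\langle x,e\rangle=\sin\varphi\}$ with $e$ a unit normal and $\varphi\in(-\pi/2,\pi/2)$. Since $Y_0,Y_1$ are independent of $H$, $\E[A_{01}\mid H]=p(\varphi)^2+(1-p(\varphi))^2$, where $p(\varphi)$ is the $f_{d-1,\beta+1/2}$-mass of the half-space $\{\langle x,e\rangle<\sin\varphi\}$. The one-dimensional marginal $\langle Y_0,e\rangle$ has, after the substitution $\langle Y_0,e\rangle=\sin\theta$, density proportional to $(\cos\theta)^{2\beta+d}$ on $(-\pi/2,\pi/2)$, so $p(\varphi)=Z^{-1}\int_{-\pi/2}^{\varphi}(\cos\theta)^{2\beta+d}\,d\theta$ with $Z=\int_{-\pi/2}^{\pi/2}(\cos\theta)^{2\beta+d}\,d\theta$. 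Applying the affine Blaschke--Petkantschin formula to $Y_2,\dots,Y_d$ (which span $H$), then rescaling the in-hyperplane coordinates by $\cos\varphi$ and substituting $t=\sin\varphi$ for the signed distance, one finds that $\varphi$ has density proportional to $(\cos\varphi)^{(d-1)(2\beta+d)}$, with $e$ independent and uniform. Because this density and $Z$ are invariant under $\varphi\mapsto-\varphi$ while $1-p(\varphi)=p(-\varphi)$, the two summands of $p(\varphi)^2+(1-p(\varphi))^2$ contribute equally after integration, giving
$$
\E A_{01}=\frac{2}{Z^{2}\int_{-\pi/2}^{\pi/2}(\cos\psi)^{(d-1)(2\beta+d)}\,d\psi}\int_{-\pi/2}^{\pi/2}(\cos\varphi)^{(d-1)(2\beta+d)}\Bigl(\int_{-\pi/2}^{\varphi}(\cos\theta)^{2\beta+d}\,d\theta\Bigr)^{2}d\varphi.
$$
Expressing the two normalizing integrals as Beta values and substituting into Step 2 yields exactly the constants in Theorems~\ref{theo:general_beta_d=3} and~\ref{theo:general_beta_d=4}; note that for $d=3$ one has $2\beta+d=2\beta+3$, $(d-1)(2\beta+d)=4\beta+6$, and for $d=4$ one has $2\beta+4$ and $6\beta+12$.

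\emph{The main obstacle} will be the constant bookkeeping in Step 3: the powers of $\cos\varphi$ coming from the Blaschke--Petkantschin Jacobian, from the beta densities restricted to $H$, from the rescaling of the $d-1$ in-hyperplane coordinates, and from $dt=\cos\varphi\,d\varphi$ must combine to exactly $(d-1)(2\beta+d)$, with all $\beta$-independent factors cancelling against the normalization of the density of $\varphi$. A secondary but essential point is the case analysis of Step 2, which is precisely what restricts the method to $d\le4$: already for $d=5$ there are two ``convex-position'' Radon types, $(2,4)$ and $(3,3)$, so $\sum_{\{i,j\}}A_{ij}$ is no longer an affine function of $\ind[\text{type }(1,d)]$ and the reduction of $\P[\text{type }(1,d)]$ to $\E A_{01}$ breaks down. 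As a check, for $d=3$ and $\beta=-1$ the formula above collapses to $\E s_0=\tfrac18$, matching Theorem~\ref{theo:main_sphere}.
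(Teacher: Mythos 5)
Your argument is correct, and its skeleton coincides with the paper's: Step 1 is the Feldman--Klain observation (Theorem~\ref{theo:feldman_klain}) combined with the projection property of beta densities (Lemma~\ref{lem:projection}), and Step 2 is, after unwinding, exactly the paper's facet count. Indeed, $\sum_{\{i,j\}}A_{ij}$ is precisely the number of $(d-2)$-faces of $\conv(Y_0,\dots,Y_d)$ (a set of $d-1$ of the points spans a facet iff the remaining two lie on the same side of its affine hull), and your values $d$ and $2(d-1)$ for the two Radon types are the paper's ``$4$ or $6$ edges'' ($d=3$) and ``$4$ or $6$ facets'' ($d=4$), so your identity $\E s_0(T_\beta)=\tfrac32-\tfrac52\E A_{01}$ is the paper's $\E s_0(T_\beta)=\tfrac32-\tfrac14\E f_2(P_{5,3}^{\beta+1/2})$ in disguise. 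Where you genuinely diverge is Step 3: the paper simply imports the expected-facet-number formula for beta polytopes (Theorem~\ref{theo:facets_beta_polytope}, quoted from Kabluchko--Temesv\'ari--Th\"ale), whereas you rederive the needed special case by conditioning on the hyperplane through $Y_2,\dots,Y_d$ and running the affine Blaschke--Petkantschin computation yourself. I checked your exponent bookkeeping: for $d=4$ the three restricted densities contribute $(\cos\varphi)^{6\beta'}$, the three in-plane Lebesgue measures $(\cos\varphi)^{6}$, the Jacobian $\Delta$ another $(\cos\varphi)^{2}$, and $\dd t=\cos\varphi\,\dd\varphi$ one more, giving $(\cos\varphi)^{6\beta'+9}=(\cos\varphi)^{6\beta+12}$ with $\beta'=\beta+\tfrac12$, and the normalizations $Z=\sqrt\pi\,\Gamma(\beta+3)^{-1}\Gamma(\beta+\tfrac52)$, $W=\sqrt\pi\,\Gamma(3\beta+7)^{-1}\Gamma(3\beta+\tfrac{13}2)$ reproduce the stated Gamma quotient; the symmetry argument replacing $p^2+(1-p)^2$ by $2p^2$ is also sound. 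So your proof is self-contained where the paper's leans on a citation, at the cost of having to carry out the Blaschke--Petkantschin constants explicitly; both correctly identify the same obstruction at $d=5$ (three combinatorial/Radon types with differing facet numbers).
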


Theorems~\ref{theo:main_sphere} and~\ref{theo:main_ball} follow from Theorems~\ref{theo:general_beta_d=3} and~\ref{theo:general_beta_d=4} by taking $\beta=-1$ or $\beta=0$ and evaluating the integrals.
The inner integrals
are given by
\begin{align*}
\int_{-\pi/2}^\varphi \cos \theta \dd \theta &= 1+ \sin \varphi,\\
\int_{-\pi/2}^\varphi(\cos \theta)^2 \dd \theta &= \frac \varphi 2 + \frac \pi 4 + \frac 12 \cos \varphi \sin \varphi,\\
\int_{-\pi/2}^\varphi(\cos \theta)^3 \dd \theta &= -\frac{1}{12} \sin^3 \varphi+\frac{3}{4}\sin \varphi +\frac{1}{4} \sin\varphi \cos ^2\varphi  +\frac{2}{3},\\
\int_{-\pi/2}^\varphi(\cos \theta)^4 \dd \theta &= \frac {3\varphi}{8} + \frac{3\pi}{16} + \frac 12 \cos \varphi \sin \varphi + \frac 18 \cos^3 \varphi \sin \varphi  - \frac 18 \cos \varphi \sin^3\varphi.
\end{align*}
Then, the computation of $\E s_0(T_{-1})$ and $\E s_0(T_0)$ reduces to evaluating standard trigonometric integrals.

\section{Proofs of Theorems~\ref{theo:general_beta_d=3} and~\ref{theo:general_beta_d=4}}\label{sec:proofs}

We start by recalling some facts that will be needed in the proofs of Theorems~\ref{theo:general_beta_d=3} and~\ref{theo:general_beta_d=4}.

\subsection{Angles as probabilities}
The first ingredient in our proofs is the following elegant observation of Feldman and Klain~\cite{feldman_klain}. It can be viewed a special case of a more general result that has been obtained earlier by Affentranger and Schneider~\cite{AS92}.
\begin{theorem}[Feldman and Klain]\label{theo:feldman_klain}
Let $S=[x_0,\ldots,x_d]\subset \R^d$ be a $d$-dimensional simplex. Let $U$ be a random vector uniformly distributed on the unit sphere $\bS^{d-1}$ and denote by $\Pi=\Pi_{U^\bot}$ the orthogonal projection onto the orthogonal complement of $U$.
Then, the sum of solid angles at all vertices of $S$ satisfies
$$
2 s_0(S) = \P[\, \Pi S \text{ is a $(d-1)$-dimensional simplex} \,].
$$
\end{theorem}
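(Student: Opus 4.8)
The plan is to turn both sides of the identity into sums of probabilities indexed by the $d+1$ vertices of $S$ and then to match the $j$-th terms. Throughout, "almost surely" refers to the law of $U$, and I would freely discard events of probability zero (the event that $U$ hits the boundary of one of the cones introduced below, or that the projected configuration is degenerate).

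\emph{Left-hand side.} First I would rewrite $s_0(S)$ cone-theoretically. For a vertex $x_j$ of $S$ let $C_j:=\pos(x_i-x_j\colon i\neq j)$ be the tangent cone of $S$ at $x_j$. In a neighbourhood of $x_j$ the simplex $S$ agrees with the translated cone $x_j+C_j$, so $x_j+\eps U\in S$ for some $\eps>0$ exactly when $U\in C_j$; hence $\beta(x_j,S)=\P[U\in C_j]$. Since $S$ is a simplex, $C_j$ is a pointed simplicial cone, so $C_j\cap(-C_j)=\{0\}$, and combining this with the central symmetry $U\eqdistr-U$ gives $\P[U\in C_j\cup(-C_j)]=2\P[U\in C_j]=2\beta(x_j,S)$. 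Summing over $j$,
\[
2s_0(S)=\sum_{j=0}^d\P[U\in C_j\cup(-C_j)].
\]

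\emph{Combinatorics of the projection.} Next I would handle the right-hand side. Write $\Pi:=\Pi_{U^\bot}$ and let $F_j:=\conv(x_i\colon i\neq j)$ be the facet of $S$ opposite $x_j$, so $\Pi F_j=\conv(\Pi x_i\colon i\neq j)$. Almost surely the $d+1$ points $\Pi x_0,\dots,\Pi x_d$ are in general position in the hyperplane $U^\bot\cong\R^{d-1}$ (every $d$ of them affinely independent); in particular $\Pi S$ is $(d-1)$-dimensional. By Radon's theorem such a configuration admits a \emph{unique} partition into two nonempty subsets with intersecting convex hulls, and $\Pi S=\conv(\Pi x_0,\dots,\Pi x_d)$ is a $(d-1)$-simplex if and only if that partition has the form $\{j\}\mid\{i\colon i\neq j\}$, i.e.\ if and only if exactly one image $\Pi x_j$ lies in the convex hull of the others. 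Thus, up to a null set,
\[
\{\Pi S\text{ is a }(d-1)\text{-simplex}\}=\bigsqcup_{j=0}^d\{\Pi x_j\in\Pi F_j\},
\]
a disjoint union.

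\emph{The vertex-by-vertex matching.} The crux — and the step I expect to be the main obstacle — is the geometric identity, valid for each $j$ and Lebesgue-a.e.\ direction $u\in\bS^{d-1}$,
\[
\Pi_{u^\bot}x_j\in\Pi_{u^\bot}F_j\iff u\in C_j\cup(-C_j).
\]
For "$\Leftarrow$": if $u\in C_j$ then the ray $x_j+su$, $s\ge0$, enters $S$; since $x_j$ lies on every facet hyperplane of $S$ except the one carrying $F_j$, the ray stays on the inner side of all those hyperplanes for every $s\ge0$ and can leave $S$ only across $F_j$, so the line $x_j+\R u$ meets $F_j$ and $\Pi x_j\in\Pi F_j$; the case $-u\in C_j$ is symmetric. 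For "$\Rightarrow$": if $\Pi_{u^\bot}x_j=\Pi_{u^\bot}p$ for some $p\in F_j$, then $p-x_j$ is parallel to $u$ and $[x_j,p]\subset S$ by convexity, whence $x_j+\eps u\in S$ or $x_j-\eps u\in S$ for small $\eps>0$, i.e.\ $u\in C_j$ or $-u\in C_j$. Discarding the null set on which some line $x_j+\R U$ meets $F_j$ only in its relative boundary, this gives $\{\Pi x_j\in\Pi F_j\}=\{U\in C_j\cup(-C_j)\}$ a.s. Putting the three parts together,
\[
\P[\Pi S\text{ is a }(d-1)\text{-simplex}]=\sum_{j=0}^d\P[U\in C_j\cup(-C_j)]=2s_0(S),
\]
which is the asserted formula. (As a sanity check, for $d=2$ the projection of a triangle onto a line is a segment, i.e.\ a $1$-simplex, with probability $1$, recovering $s_0(S)=1/2$; for $d=3$ the two alternatives are "triangle" and "quadrilateral", consistent with $s_0(S)\in(0,1/2)$.) The argument is self-contained and uses only Radon's theorem and the local structure of a simplex at a vertex.
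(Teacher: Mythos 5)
Your proof is correct and follows essentially the same route as the paper's own justification: decompose the event that $\Pi S$ is a $(d-1)$-simplex into the disjoint union over vertices of the events $\{\Pi x_j\in\Pi F_j\}$, identify each with $\{U\in C_j\cup(-C_j)\}$, and use the symmetry of $U$ together with pointedness of the tangent cone to get twice the solid angle. You merely supply the details the paper leaves implicit (Radon uniqueness for the disjointness, and the two-sided cone/line argument), so there is nothing to add.
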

Indeed, if we ignore degenerate cases of probability $0$, then the projection $\Pi S$ is a $(d-1)$-dimensional simplex if and only if the projection of one of the vertices of $S$ falls into the convex hull of the projections of the other vertices.
Such vertex is unique, if it exists. Consider, for concreteness, the random  event $\{\Pi x_0 \in [\Pi x_1,\ldots,\Pi x_d]\}$. It occurs  if and only if $U$ or $-U$ belongs to the tangent cone of $S$ at the vertex $x_0$ defined by
$$
T(x_0,P) := \{y\in \R^d\colon  \exists \eps>0 \text{ such that } x_0 + \eps y \in P\}.
$$
The probability of this event is twice the solid angle of $S$ at $x_0$. Taking the sum over all vertices $x_i$, Feldman and Klain  arrived at their formula.

\subsection{Number of facets of the beta polytope}
The second ingredient is the following formula for the expected number of facets of the beta polytope essentially obtained in~\cite{beta_polytopes_temesvari}. The convex hull of $n$ independent random points $Z_1,\ldots,Z_n$ in $\R^d$ distributed according to the law $f_{d,\beta}$ with $\beta\geq -1$ is called the \textit{beta polytope} and denoted by
$$
P_{n,d}^\beta := [Z_1,\ldots,Z_n].
$$
Let $f_{k}(P_{n,d}^\beta)$ be the number of $k$-dimensional faces of the polytope $P_{n,d}^\beta$.

\begin{theorem}[Facets of the beta polytope]\label{theo:facets_beta_polytope}
The expected number of facets of $P_{n,d}^\beta$ is given by
\begin{equation}
\E f_{d-1}(P_{n,d}^\beta)
=
C_{n,d}^{\beta,0} \int_{-1}^{1} \left(1-h^2\right)^{d\beta + \frac{d^2-1}{2}} (F_{1,\beta+\frac{d-1}{2}}(h))^{n-d} \dd h,
\end{equation}
where
\begin{align}
F_{1,\beta}(h)
&=
c_{1,\beta} \int_{-1}^h (1-x^2)^\beta \dd x, \qquad h\in [-1,1],\label{eq:F_def}\\
C_{n,d}^{\beta,0}
&=
\binom nd \cdot \frac 2 {\Gamma(\frac d2)} \cdot \frac{\Gamma\left( \frac{d}{2}(2 \beta + d) +1 \right)}{\Gamma\left( \frac{d}{2}(2 \beta + d) + \frac{1}{2} \right)} \cdot \prod_{i=1}^{d-1} \frac{\Gamma\left( \frac{i+1}{2} \right)}{\Gamma\left( \frac{i}{2} \right)}.
\label{eq:facets_beta_poly_C_n_d_beta_0}
\end{align}
\end{theorem}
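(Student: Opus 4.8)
The plan is to combine the affine Blaschke--Petkantschin formula with the rotational symmetry and ``slicing'' properties of the beta densities $f_{d,\beta}$. Since the beta polytope is a.s.\ simplicial, every facet has exactly $d$ vertices, so by exchangeability
$$
\E f_{d-1}(P_{n,d}^\beta) = \binom nd\, \P\big[[Z_1,\ldots,Z_d]\text{ is a facet of }P_{n,d}^\beta\big].
$$
Almost surely $Z_1,\ldots,Z_d$ affinely span a hyperplane $H$, and $[Z_1,\ldots,Z_d]$ is a facet precisely when all of the remaining points $Z_{d+1},\ldots,Z_n$ lie on one side of $H$. Hence it suffices to (i) determine the joint distribution of $H$ and of the $d$ points inside it, and (ii) compute, given $H$, the probability that an independent $f_{d,\beta}$-point lands on a prescribed side of $H$.

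For (i) I would apply the affine Blaschke--Petkantschin formula to the density $\prod_{i=1}^d c_{d,\beta}(1-\|z_i\|^2)^\beta$. This disintegrates $(Z_1,\ldots,Z_d)$ over the motion-invariant measure on hyperplanes, the Jacobian being a single factor $\Vol_{d-1}([z_1,\ldots,z_d])$. Parametrising a hyperplane as $H(u,h)=\{x\in\R^d\colon\langle x,u\rangle = h\}$ with $u\in\bS^{d-1}$, $h\in[-1,1]$, and writing a point of $H$ as $hu+\sqrt{1-h^2}\,w$ with $w\in\bB^{d-1}$, one has $1-\|z_i\|^2=(1-h^2)(1-\|w_i\|^2)$, each slice Lebesgue element contributes $(1-h^2)^{(d-1)/2}\dd w_i$, and $\Vol_{d-1}([z_1,\ldots,z_d])=(1-h^2)^{(d-1)/2}\Vol_{d-1}([w_1,\ldots,w_d])$. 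Collecting the powers of $1-h^2$ --- namely $d\beta$ from the densities, $d\cdot\tfrac{d-1}{2}$ from the $d$ slice elements, and $\tfrac{d-1}{2}$ from the volume Jacobian --- gives exactly $(1-h^2)^{d\beta+(d^2-1)/2}$, while the residual integral of $\Vol_{d-1}([w_1,\ldots,w_d])\prod_{i=1}^d(1-\|w_i\|^2)^\beta$ over $(\bB^{d-1})^d$ is a constant depending only on $d,\beta$; up to normalisation it is the expected $(d-1)$-volume of a $(d-1)$-dimensional beta simplex, which has a known closed form (Ruben--Miles, Miles). This accounts for the factor $(1-h^2)^{d\beta+(d^2-1)/2}\dd h$ together with all the Gamma-function constants in $C_{n,d}^{\beta,0}$.

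For (ii), each $Z_j$ with $j>d$ is independent of $(Z_1,\ldots,Z_d)$ and hence of $H$; by the rotational invariance of $f_{d,\beta}$, its directional marginal $\langle Z_j,u\rangle$ is, conditionally on $H(u,h)$, distributed according to $f_{1,\beta+(d-1)/2}$ --- the standard fact that a one-dimensional projection of the $d$-dimensional beta law has parameter $\beta+\tfrac{d-1}{2}$. Therefore the conditional probability that $Z_j$ lies in $\{\langle\cdot,u\rangle<h\}$ is $F_{1,\beta+\frac{d-1}{2}}(h)$, and by the symmetry $F_{1,\beta'}(h)+F_{1,\beta'}(-h)=1$ the probability that $Z_{d+1},\ldots,Z_n$ all lie on one common side of $H$ equals $F_{1,\beta+\frac{d-1}{2}}(h)^{n-d}+F_{1,\beta+\frac{d-1}{2}}(-h)^{n-d}$. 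Integrating against the distribution of $h$ found in (i) and using that the latter is symmetric in $h\mapsto-h$ collapses the two summands into a single integral proportional to $\int_{-1}^1(1-h^2)^{d\beta+(d^2-1)/2}\big(F_{1,\beta+\frac{d-1}{2}}(h)\big)^{n-d}\dd h$, as claimed.

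The genuinely routine parts are step (ii) and the resulting elementary integral; the main obstacle is the exact bookkeeping in step (i): tracking the constant in the affine Blaschke--Petkantschin formula, the powers of $c_{d,\beta}$ and $c_{1,\beta+(d-1)/2}$ coming from the density normalisations, and the Ruben--Miles constant for the expected beta-simplex volume, and then checking that together with $\binom nd$ they assemble into precisely the displayed $C_{n,d}^{\beta,0}$. Since exactly this computation is carried out in~\cite{beta_polytopes_temesvari} --- there as the $\ell=0$ case of a more general formula for expected numbers of $\ell$-faces, which explains the superscript $0$ in $C_{n,d}^{\beta,0}$ --- the cleanest route is to quote that reference, using the reduction above only to pin down which of its formulas is being invoked.
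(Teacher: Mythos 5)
Your proposal is correct and lands where the paper does: the paper's own proof of this theorem is simply a citation of Remark~2.14 and Theorem~2.11 of~\cite{beta_polytopes_temesvari} for the integral formula, plus Proposition~2.8(a) of that paper for the expected simplex volume $\E_\beta(\Delta_{d-1})$, followed by the bookkeeping that assembles $C_{n,d}^{\beta,0}$ --- exactly the route you recommend in your final paragraph. Your Blaschke--Petkantschin sketch is a faithful outline of what that cited result actually proves (the exponent count $d\beta + d\cdot\tfrac{d-1}{2} + \tfrac{d-1}{2} = d\beta + \tfrac{d^2-1}{2}$ and the one-dimensional marginal $f_{1,\beta+\frac{d-1}{2}}$ are both right), so there is no gap; you have just supplied more of the underlying argument than the paper chooses to reproduce.
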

\begin{proof}
By Remark 2.14 and Theorem 2.11 of~\cite{beta_polytopes_temesvari} we have (using the notation of that paper)
$$
\E f_{d-1}(P_{n,d}^\beta) =  \E T^{d,d-1}_{0,0} (P_{n,d}^\beta)
=
C_{n,d}^{\beta,0} \int_{-1}^{1} \left(1-h^2\right)^{d\beta + \frac{d^2-1}{2}} (F_{1,\beta+\frac{d-1}{2}}(h))^{n-d} \dd h,
$$
where
$$
C_{n,d}^{\beta,0}
=
\binom{n}{d} d! \frac{\pi^{d/2}}{\Gamma(\frac d2 +1)} \E_\beta \left( \Delta_{d-1} \right) \left( \frac{c_{d,\beta}}{c_{d-1,\beta}} \right)^d.
$$
The value of $\E_\beta( \Delta_{d-1})$ is given by Proposition 2.8 (a) of~\cite{beta_polytopes_temesvari} as follows:
$$
\E_\beta (\Delta_{d-1}^1)
=
\frac 1 {(d-1)!} \frac{\Gamma\left( \frac{d}{2}(2 \beta + d) +1 \right)}{\Gamma\left( \frac{d}{2}(2 \beta + d ) + \frac{1}{2} \right)} \left( \frac{\Gamma\left( \frac{d+1}{2} + \beta\right)}{\Gamma\left( \frac{d}{2} + \beta + 1 \right)} \right)^{d}  \cdot \prod_{i=1}^{d-1} \frac{\Gamma\left( \frac{i+1}{2} \right)}{\Gamma\left( \frac{i}{2} \right)}.
$$
Taking everything together, we obtain the required formula for $C_{n,d}^{\beta,0}$.
\end{proof}

\subsection{Projections of beta distributions}
Denote by $\pi_L: \R^d\rightarrow L$ the orthogonal projection on a $(d-1)$-dimensional linear subspace $L\subset \R^d$ which is allowed to be random.
The next result, see~\cite[Lemma~4.4]{beta_polytopes_temesvari}, states essentially that the projection of the $f_{d,\beta}$-distribution to $L$ is the $f_{d-1,\beta+\frac 12}$-distribution. There is, however, one technical subtlety to take care of. The projected distribution is a probability measure on $L$ (which is random) rather than on $\R^{d-1}$, hence we need to fix some way of identifying $L$ with the standard Euclidean space $\R^{d-1}$. To this end, we fix for each linear hyperplane $H\subset \R^d$ an isometry $I_H: H \to \R^{d-1}$ identifying $H$ with $\R^{d-1}$ such that $I_H(0)=0$. The only requirement we impose on this family of isometries is the Borel-measurability of the map $(x,H) \mapsto I_H(\pi_H(x))$.

\begin{lemma}[Orthogonal projections]\label{lem:projection}
Let $L$ be a random (not necessarily uniformly distributed)  $(d-1)$-dimensional linear subspace of $\R^d$.
If the random point $X$ has distribution $f_{d,\beta}$ for some $\beta\geq -1$ and is independent of $L$, then $I_L(\pi_L (X))$ has density $f_{d-1,\beta+\frac{1}{2}}$.
\end{lemma}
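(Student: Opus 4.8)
The plan is to condition on $L$ so as to reduce to a fixed coordinate hyperplane, and then to identify the projected law by an elementary one-dimensional integral.

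First I would reduce to a fixed hyperplane. Since $X$ is independent of $L$ and the map $(x,H)\mapsto I_H(\pi_H(x))$ is Borel, it suffices to prove that for \emph{each} fixed linear hyperplane $H\subset\R^d$ the vector $I_H(\pi_H(X))$ has density $f_{d-1,\beta+\frac12}$; the claim for random $L$ then follows by averaging over the law of $L$. Fixing $H$, choose $\rho\in\OGroup(d)$ with $\rho(H_0)=H$, where $H_0:=\R^{d-1}\times\{0\}$, so that $\pi_H=\rho\circ\pi_{H_0}\circ\rho^{-1}$. The rotational invariance of $f_{d,\beta}$ gives $\rho^{-1}X\eqdistr X$, hence $\pi_H(X)\eqdistr\rho(\pi_{H_0}(X))$ and consequently $I_H(\pi_H(X))\eqdistr Q\,(X_1,\dots,X_{d-1})$ for a suitable $Q\in\OGroup(d-1)$, namely the orthogonal map that $I_H\circ\rho$ induces under the standard identification $H_0\cong\R^{d-1}$. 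Since $f_{d-1,\beta+\frac12}$ is rotationally invariant on $\R^{d-1}$, it therefore suffices to show that the marginal $(X_1,\dots,X_{d-1})$ of $X$ has density $f_{d-1,\beta+\frac12}$.

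The marginal density vanishes for $\|y\|\ge1$, and for $y\in\R^{d-1}$ with $\|y\|<1$ integrating out the last coordinate gives, after the substitution $t=\sqrt{1-\|y\|^2}\,s$,
\[
g(y)=c_{d,\beta}\int_{-\sqrt{1-\|y\|^2}}^{\sqrt{1-\|y\|^2}}\bigl(1-\|y\|^2-t^2\bigr)^{\beta}\,\dd t
=c_{d,\beta}\,(1-\|y\|^2)^{\beta+\frac12}\int_{-1}^{1}(1-s^2)^{\beta}\,\dd s .
\]
The last integral is the Beta integral $\mathrm{B}(\tfrac12,\beta+1)=\sqrt\pi\,\Gamma(\beta+1)/\Gamma(\beta+\tfrac32)$, and a short Gamma-function manipulation with the explicit constant in~\eqref{eq:def_f_beta} yields $c_{d,\beta}\cdot\sqrt\pi\,\Gamma(\beta+1)/\Gamma(\beta+\tfrac32)=c_{d-1,\beta+\frac12}$, so $g=f_{d-1,\beta+\frac12}$. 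The boundary case $\beta=-1$, where $X$ is uniform on $\bS^{d-1}$ and the Beta integral diverges, is handled separately: either pass to the weak limit $\beta\downarrow-1$ (using that $f_{d,\beta}$ tends weakly to the uniform law on $\bS^{d-1}$ by~\cite[Proof of Corollary~3.9]{beta_polytopes_temesvari}, that orthogonal projection is continuous, and that $f_{d-1,\beta+\frac12}$ tends weakly to $f_{d-1,-\frac12}$), or compute directly that the push-forward of the uniform measure on $\bS^{d-1}$ under a coordinate projection has density proportional to $(1-\|y\|^2)^{-1/2}\ind_{\{\|y\|<1\}}$, i.e.\ equals $f_{d-1,-\frac12}$ after normalization.

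I expect the only point requiring genuine care to be the first step: the bookkeeping around the random subspace $L$ and the arbitrary-but-fixed family of identifying isometries $I_H$. The analytic heart --- the marginal computation --- is a routine Beta-integral identity.
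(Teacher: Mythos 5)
Your proof is correct and follows essentially the same route as the paper: condition on the random subspace $L$, use rotational invariance of $f_{d,\beta}$ to reduce to the coordinate hyperplane, and identify the marginal there. The paper simply cites \cite[Lemma~4.4]{beta_polytopes_temesvari} for that last marginal computation, whereas you carry it out explicitly (correctly, including the normalizing constant and the boundary case $\beta=-1$).
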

In~\cite[Lemma~4.4]{beta_polytopes_temesvari}, the lemma was proved for the hyperplane spanned by the first $d-1$ standard basis vectors. By rotational invariance of the beta distribution, it is true for an arbitrary deterministic hyperplane $L$. To prove it for random $L$, condition on all possible realizations of $L$ and integrate.

\subsection{Proof of Theorem~\ref{theo:general_beta_d=3}}
Recall that $X_0,\ldots,X_3$ are independent random points in the three-dimensional unit ball $\bB^3$  sampled from the distribution $f_{3,\beta}$, $\beta\geq -1$. Independently of the $X_i$'s, let $U$ be uniformly distributed on the unit sphere $\bS^{2}$.
Consider an orthogonal projection $\Pi$ of the tetrahedron $[X_0,\ldots,X_3]$ onto the random, uniformly distributed, two-dimensional plane $L:= U^\bot$.  By the projection property of the beta densities stated in Lemma~\ref{lem:projection}, the four projected points (or, to be more precise, the points $I_L(\Pi X_i)$, $i=0,\ldots,3$) have the density $f_{2,\beta+\frac 12}$. It follows that
$$
\E f_1 (\Pi [X_0,\ldots,X_3]) = \E f_1(P_{4,2}^{\beta+ \frac 12}).
$$

Disregarding degenerate cases (that have probability $0$), we have two possibilities: either the projection is a triangle, or the projection is a quadrilateral. Denote the probability that the projection is a triangle by $p$. Then, by Theorem~\ref{theo:feldman_klain},
$$
\E s_0(T_\beta) = p/2.
$$
On the other hand, we can compute the expected number of edges of the projection as follows:
$$
\E f_1 (\Pi [X_0,\ldots,X_3]) = 4(1-p) + 3 p = 4 - p.
$$
It follows that
$$
\E s_0(T_\beta) = 2 - \frac 12 \E f_1(P_{4,2}^{\beta+ \frac 12}).
$$
By Theorem~\ref{theo:facets_beta_polytope}, we have
$$
\E f_1(P_{4,2}^{\beta+ \frac 12})
=
C_{4,2}^{\beta+\frac 12,0} \int_{-1}^{+1} (1-h^2)^{2\beta + \frac 52} \left(c_{1,\beta+1} \int_{-1}^h (1-x^2)^{\beta+1} \dd x\right)^2 \dd h.
$$
Here, $C_{4,2}^{\beta+\frac 12,0}$ is the constant given by~\eqref{eq:facets_beta_poly_C_n_d_beta_0}. After some algebra, we obtain
$$
C_{4,2}^{\beta+\frac 12,0} = \frac{12}{\sqrt \pi} \cdot \frac{\Gamma(2\beta+4)}{\Gamma(2\beta + \frac 72)}.
$$
Taking everything together, we arrive at
\begin{equation}\label{eq:E_s_0_d_3}
\E s_0(T_\beta)
=
2 - \frac{6}{\sqrt \pi} \cdot \frac{\Gamma(2\beta+4)}{\Gamma(2\beta + \frac 72)} \cdot\int_{-1}^{+1} (1-h^2)^{2\beta + \frac 52} \left(c_{1,\beta+1} \int_{-1}^h (1-x^2)^{\beta+1} \dd x\right)^2 \dd h.
\end{equation}
Recall from~\eqref{eq:def_f_beta} that
$$
c_{1,\beta+1} = \frac{ \Gamma\left(\beta + \frac 52 \right) }{ \sqrt \pi \Gamma\left( \beta+2\right) }.
$$
The change of variables $h= \sin \varphi$ and $x=\sin \theta$ with $\varphi,\theta\in (-\pi/2, +\pi/2)$ transforms~\eqref{eq:E_s_0_d_3} into
$$
\E s_0(T_\beta)
=
2 - \frac{6\, \Gamma^2(\beta+\frac 52)\Gamma(2\beta+4)}{\pi^{3/2}\Gamma^2(\beta +2)\Gamma(2\beta + \frac 72)} \int_{-\pi/2}^{+\pi/2} (\cos\varphi)^{4\beta + 6} \left(\int_{-\pi/2}^\varphi (\cos \theta)^{2\beta+3}\dd \theta\right)^2 \dd \varphi.
$$
This completes the proof. \hfill $\Box$

\subsection{Proof of Theorem~\ref{theo:general_beta_d=4}}
Recall that $X_0,\ldots,X_4$ are independent random points in the four-dimensional unit ball $\bB^4$  sampled from the probability distribution $f_{4,\beta}$, $\beta\geq -1$. Independently of the $X_i$'s, let $U$ be uniformly distributed on the unit sphere $\bS^{3}$.
Let $\Pi$ denote the orthogonal projection on  the random, uniformly distributed, three-dimensional linear subspace $L:= U^\bot$.  By the projection property of the beta densities stated in Lemma~\ref{lem:projection}, the points $I_L(\Pi X_i)$, $i=0,\ldots,4$, have the density $f_{3,\beta+\frac 12}$ on $\R^3$. It follows that
$$
\E f_2 (\Pi [X_0,\ldots,X_4]) = \E f_2(P_{5,3}^{\beta+ \frac 12}).
$$
Denote by $p$ the probability that the projection $\Pi [X_0,\ldots,X_4]$  is a $3$-dimensional simplex. Then, by Theorem~\ref{theo:feldman_klain},
$$
\E s_0(T_\beta) = p/2.
$$
Disregarding degenerate cases of probability $0$, there are two possible combinatorial types of the projected polytope: the $3$-dimensional tetrahedron with $4$ facets, and the $3$-dimensional polytope with $6$ facets obtained by gluing together two tetrahedra at a common facet; see~\cite[Chapter~6.1]{GruenbaumBook}. Since the probabilities of these combinatorial types are $p$ and $1-p$, respectively, we have
$$
\E f_2 (P_{5,3}^{\beta + \frac 12}) = 4 p + 6 (1-p) = 6-2p.
$$
It follows that
$$
\E s_0(T_\beta) = \frac 32 - \frac 14 \E f_2 (P_{5,3}^{\beta + \frac 12}).
$$
The expected number of facets of $P_{5,3}^{\beta + \frac 12}$ can be computed with the help of Theorem~\ref{theo:facets_beta_polytope} as follows:
$$
\E f_2 (P_{5,3}^{\beta + \frac 12}) =  C_{5,3}^{\beta + \frac 12,0}  \int_{-1}^{+1} (1-h^2)^{3\beta + \frac {11} 2}  \left(c_{1,\beta+\frac 32} \int_{-1}^h (1-x^2)^{\beta+\frac 32} \dd x\right)^2 \dd h.
$$
The value of the constant $C_{5,3}^{\beta + \frac 12,0}$ is given by~\eqref{eq:facets_beta_poly_C_n_d_beta_0}.  After some simplifications we arrive at
$$
C_{5,3}^{\beta + \frac 12,0} = \frac {20}{\sqrt \pi} \cdot \frac{\Gamma(3\beta+7)}{\Gamma(3\beta + \frac {13}2)}.
$$
Taking everything together leads to
\begin{equation}\label{eq:E_s_0_d_4}
\E s_0(T_\beta)  = \frac 32  -\frac {5}{\sqrt \pi} \cdot \frac{\Gamma(3\beta+7)}{\Gamma(3\beta + \frac {13}2)} \cdot
 \int_{-1}^{+1} (1-h^2)^{3\beta + \frac {11} 2}  \left(c_{1,\beta+\frac 32} \int_{-1}^h (1-x^2)^{\beta+\frac 32} \dd x\right)^2 \dd h.
\end{equation}
Recall from~\eqref{eq:def_f_beta} that
$$
c_{1,\beta+\frac 32} = \frac{ \Gamma\left(\beta + 3 \right) }{ \sqrt \pi \Gamma\left( \beta + \frac 5 2\right) }.
$$
The change of variables $h= \sin \varphi$ and $x=\sin \theta$ transforms~\eqref{eq:E_s_0_d_4} into
$$
\E s_0(T_\beta) = \frac 32 -  \frac{5\, \Gamma^2(\beta+3)\Gamma(3\beta+7)}{\pi^{3/2} \Gamma^2(\beta + \frac 5 2)\Gamma(3\beta + \frac {13}2)}
\int_{-\pi/2}^{+\pi/2} (\cos\varphi)^{6\beta+12} \left(\int_{-\pi/2}^\varphi (\cos \theta)^{2\beta+4}\dd\theta\right)^2 \dd \varphi.
$$
The proof is complete. \hfill $\Box$

\subsection{Remark on higher dimensions}
The method that we used in the cases $d=3$ and $d=4$ breaks down for $d\geq 5$ for the following reason. If $d = 5$, then the projection of the beta-simplex on a random hyperplane is a $4$-dimensional simplicial polytope with at most $6$ vertices. It is known~\cite[Chapter~6.1]{GruenbaumBook} that there are three different combinatorial types of such polytopes: the simplex and two polytopes denoted by $T_1^4$ and $T_2^4$. The method used in the present paper is based on the fact that for $d=3,4$ there are just two possible combinatorial types (or a weaker statement that all combinatorial types except the simplex have the same number of facets). Even the latter weaker statement breaks down for $d=5$ since $T_1^4$ and $T_2^4$ have $8$ and $9$ facets, respectively.   For general $d\geq 5$, the situation gets even worse since the number of different combinatorial types is then $1+ [(d-1)/2]$; see~\cite[Chapter~6.1]{GruenbaumBook}.

\section*{Acknowledgement}
Supported by the German Research Foundation under Germany's Excellence Strategy  EXC 2044 -- 390685587, Mathematics M\"unster: Dynamics - Geometry - Structure.


\bibliography{angles_bib}
\bibliographystyle{plainnat}

\end{document}